\documentclass{article}
\usepackage[utf8]{inputenc}
\usepackage{amsmath, amsthm, amssymb, amsfonts}
\usepackage{bm}
\usepackage{dsfont}
\usepackage[utf8]{inputenc}
\usepackage{rotate}
\usepackage{tikz}
\usepackage{tikz-cd}
\usepackage[arrow,matrix,curve]{xy} 	
\usepackage{xcolor}
\usepackage{todonotes}
\usepackage{alltt} 
\usepackage{amsmath,amssymb}

\theoremstyle{plain}
\newtheorem{theorem}{Theorem}[section]
\newtheorem{proposition}[theorem]{Proposition}

\newtheorem{corollary}[theorem]{Corollary}

\theoremstyle{definition}
\newtheorem{definition}[theorem]{Definition}

\newtheorem{remark}[theorem]{Remark}

\title{Polynomial p-adic Low-Discrepancy Sequences}
\author{Christian Weiss}
\date{\today}

\begin{document}

\maketitle

\begin{abstract} The classic example of a low-discrepancy sequence in $\mathbb{Z}_p$ is $(x_n) = an+b$ with $a \in \mathbb{Z}_p^x$ and $b \in \mathbb{Z}_p$. Here we address the non-linear case and show that a polynomial $f$ generates a low-discrepancy sequence in $\mathbb{Z}_p$ if and only if it is a permutation polynomial $\mod p$ and $\mod p^2$. By this it is possible to construct non-linear examples of low-discrepancy sequences in $\mathbb{Z}_p$ for all primes $p$. Moreover, we prove a criterion which decides for any given polynomial in $\mathbb{Z}_p$ with $p \in \left\{ 3,5, 7\right\}$ if it generates a low-discrepancy sequence. We also discuss connections to the theories of Poissonian pair correlations and real discrepancy. 
\end{abstract}

\section{Introduction}
While uniform distribution theory is in the real setting typically formulated for sequences in the unit interval $[0,1]$, its p-adic analogue for a prime $p$ concerns sequences in the p-adic integers $\mathbb{Z}_p$: a sequence $(x_n) \in \mathbb{Z}_p$ is called uniformly distributed if and only if for every $z \in \mathbb{Z}_p$ and $k \in \mathbb{N}$ it holds that
$$\lim_{N \to \infty} \left| \frac{\#\left( D_p(z,1/p^k) \cap \{ x_1,\ldots,x_N \}\right) }{N} - \frac{1}{p^k} \right| = 0,$$
where
$$D_p(z,p^{-k}) := \left\{ x \, : \, \left| x - z \right|_p \leq p^{-k} \right\} \subset \mathbb{Z}_p$$ 
and $|\cdot|_p$ denotes the p-adic absolute value. The term $\tfrac{1}{p^k}$ represents the Haar measure $\mu(D_p(z,p^{-k}))$ and the p-adic definition of uniform distribution thus resembles the real one. The notion was originally introduced in \cite{Cug62}, while a discussion of many important properties of uniformly distributed sequences in $\mathbb{Z}_p$ can also be found e.g. in \cite{KN74}. To quantify the degree of uniformity the $p$-adic discrepancy is a main tool. This notion stems from \cite{Cug62} as well, see also \cite{Som22}. For a sequence  $(x_n)_{n \in \mathbb{N}} \subset \mathbb{Z}_p$ and $N \in \mathbb{N}$, the $p$-adic discrepancy is defined as
$$D_N(x_n) := \sup_{z \in \mathbb{Z}_p, k \in \mathbb{N}} \left| \frac{\#\left( D_p(z,1/p^k) \cap \{ x_1,\ldots,x_N \}\right)}{N} - \frac{1}{p^k} \right|.$$
By the same argument as in the real setting, it is straightforward to see that $\tfrac{1}{N} \leq D_N(x_n) \leq 1$ holds for all sequences $(x_n) \subset \mathbb{Z}_p$ and all $N \in \mathbb{N}$. In fact, there is even a relation between the discrepancy of an arbitrary sequence in $\mathbb{Z}_p$ and the discrepancy of an associated real sequence, see \cite{Mei68} and Theorem~\ref{thm:meijer}. However unlike in  the real setting, compare \cite{Sch72}, there are \emph{sequences} in the p-adic integers with
$$D_N(x_n) = \mathcal{O}\left( \frac{1}{N} \right),$$
i.e. without any additional dependency on $N$ in the numerator as the following result from \cite{Bee69} shows. 
\begin{theorem} \label{thm:disc:kron:quant} Let $x_n = na +b$ for $n \in \mathbb{N}$ with $a,b \in \mathbb{Z}_p$. Then
$$D_N(x_n) = \mathcal{O} \left( \frac{1}{N} \right)$$
if and only if $a$ is a unit, i.e. $a \in \mathbb{Z}_p^\times$.
\end{theorem}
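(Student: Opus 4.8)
The plan is to reduce the problem to a counting statement about residue classes. The first observation is that a ball $D_p(z,p^{-k})$ is precisely the set of $x \in \mathbb{Z}_p$ with $x \equiv z \pmod{p^k}$, so as $z$ and $k$ vary these balls are exactly the residue classes modulo $p^k$ for all $k \in \mathbb{N}$, each of Haar measure $p^{-k}$. Hence $D_N(x_n)$ measures how evenly $x_1,\ldots,x_N$ are spread across the $p^k$ residue classes modulo $p^k$, maximized over $k$. I would then treat the two directions separately.

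For the ``if'' direction, suppose $a \in \mathbb{Z}_p^\times$. The key point is that a unit is invertible modulo $p^k$ for every $k$, so the reduction of $n \mapsto na+b$ modulo $p^k$ is an affine bijection of $\mathbb{Z}/p^k\mathbb{Z}$. Consequently the multiset $\{x_1,\ldots,x_N \bmod p^k\}$ has exactly the same class-occupancy profile as $\{1,2,\ldots,N \bmod p^k\}$, only relabeled. For the latter, writing $N = q p^k + r$ with $0 \leq r < p^k$, every class is hit either $q$ or $q+1$ times. A direct computation then shows that for each class the quantity $\left| \text{count}/N - 1/p^k \right|$ is strictly smaller than $1/N$, uniformly in $k$; taking the supremum over $k$ (the extremal configuration being a singleton class as $p^k \to \infty$) yields $D_N(x_n) = 1/N$. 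In particular $D_N(x_n) = \mathcal{O}(1/N)$, and in fact one recovers the sharp value rather than merely the order.

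For the ``only if'' direction I would argue by contraposition. If $a \notin \mathbb{Z}_p^\times$ then $a$ lies in the maximal ideal $p\mathbb{Z}_p$, i.e. $a \equiv 0 \pmod p$, and therefore $x_n = na + b \equiv b \pmod p$ for every $n$. Thus all $N$ points lie in the single residue class $D_p(b,1/p)$ modulo $p$. Evaluating the discrepancy at $k=1$ and $z=b$ gives a term equal to $\left| N/N - 1/p \right| = 1 - 1/p$, so $D_N(x_n) \geq 1 - 1/p$ for all $N$. Since this lower bound is a positive constant independent of $N$, the discrepancy cannot be $\mathcal{O}(1/N)$, which completes the contrapositive.

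The computations involved are elementary; the only place demanding care is the ``if'' direction, where one must handle the supremum over all $k$ simultaneously --- both the regime $p^k \leq N$ and the regime $p^k > N$ --- and verify that the bijection property (equivalently, $|a|_p = 1$) is exactly what forces the optimal balancing of the counts. I expect that bookkeeping, rather than any conceptual difficulty, to be the main obstacle.
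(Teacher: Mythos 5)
Your proof is correct, and it follows essentially the same route as the paper: the paper itself only cites this result (from Beer's work), but its proof of the generalization, Theorem~\ref{thm:disc:permutation}, uses exactly your argument --- a bijection modulo $p^k$ forces every ball $D_p(z,p^{-k})$ to contain $\lfloor Np^{-k}\rfloor$ or $\lfloor Np^{-k}\rfloor+1$ of the first $N$ points, giving $D_N = \mathcal{O}(1/N)$, while a non-unit $a$ traps all points in one residue class and bounds the discrepancy below by a constant. Your write-up is if anything more careful about the supremum over $k$, including the regime $p^k > N$, and it recovers the sharp value $D_N = 1/N$ in the unit case.
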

A sequence in $\mathbb{Z}_p$ is therefore called a p-adic low-discrepancy sequence if $D_N(x_n) = \mathcal{O} \left( \frac{1}{N} \right)$ and Theorem~\ref{thm:disc:kron:quant} completely classifies these sequences in the linear case. To the best of the author's knowledge these examples are the only explicitly known examples of low-discrepancy sequences in $\mathbb{Z}_p$.\footnote{The results from \cite{Bee69} or \cite{Mei68} could nonetheless probably be also applied to calculate or estimate the p-adic discrepancy of other examples but it seems like this attempt has not been conducted in the literature. However, we would like to make a more systematic analysis here.} In this paper, we consider sequences $(x_n) = (f(n))$ in $\mathbb{Z}_p$, where $f$ is an arbitrary polynomial  and systematically analyze their p-adic discrepancy. The following theorem may be regarded as an analogue of Theorem~\ref{thm:disc:kron:quant} for arbitrary polynomials. 
\begin{theorem} \label{thm:disc:permutation} Let $f$ be a polynomial. Then $(x_n) = (f(n))$ satisfies $D_N(x_n) = \mathcal{O}\left(\frac{1}{N} \right)$ if and only if $f$ is a permutation polynomial $\mod p$ and $\mod p^2$.
\end{theorem}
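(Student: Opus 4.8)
The plan is to translate the entire statement into a counting problem about the residues $f(n) \bmod p^k$ and then to invoke the classical theory of permutation polynomials over $\mathbb{Z}/p^k\mathbb{Z}$. The starting observation is that the ball $D_p(z,p^{-k})$ is exactly the residue class of $z$ modulo $p^k$, so that
$$\#\left(D_p(z,p^{-k}) \cap \{x_1,\dots,x_N\}\right) = \#\{\, n \le N : f(n) \equiv z \pmod{p^k}\,\}.$$
Since $f = \sum_i a_i x^i$ with $a_i \in \mathbb{Z}_p$, the difference $f(n+p^k)-f(n)=\sum_i a_i\big((n+p^k)^i - n^i\big)$ lies in $p^k\mathbb{Z}_p$, so $n \mapsto f(n) \bmod p^k$ is periodic of period $p^k$ and descends to a well-defined map $\mathbb{Z}/p^k\mathbb{Z} \to \mathbb{Z}/p^k\mathbb{Z}$. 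Writing $N = q\,p^k + r$ with $0 \le r < p^k$, the count over the $q$ complete periods equals $q$ times the number of preimages of $z$ under this map, plus a boundary term bounded by that same number of preimages. Hence everything reduces to controlling, for each $k$, how many solutions $f(n)\equiv z$ has in one period, i.e.\ to whether $f$ is a permutation polynomial $\bmod p^k$.

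For the sufficiency direction I would first promote the two hypotheses to a statement about all powers of $p$: a polynomial is a permutation polynomial $\bmod p$ and $\bmod p^2$ if and only if it is a permutation polynomial $\bmod p^k$ for every $k \ge 1$. The key is the derivative criterion: permuting $\mathbb{Z}/p^2\mathbb{Z}$ forces $f'(a) \not\equiv 0 \pmod p$ for all $a$, since otherwise $f(a+tp) \equiv f(a) \pmod{p^2}$ for all $t$, collapsing $p$ distinct residues to one. Conversely, if $f$ permutes $\mathbb{Z}/p\mathbb{Z}$ and $f'$ has no root $\bmod p$, a Hensel-type induction lifts the unique solution $\bmod p^{k-1}$ to a unique solution $\bmod p^k$ via $f(n_0 + tp^{k-1}) \equiv f(n_0) + f'(n_0)\,t\,p^{k-1} \pmod{p^k}$ together with the invertibility of $f'(n_0)$ modulo $p$. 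Granting this, every class $\bmod p^k$ receives exactly one hit per period, so $N = qp^k + r$ yields a count of $q$ or $q+1$, and the elementary bound $|\varepsilon p^k - r| < p^k$ for the boundary contribution $\varepsilon\in\{0,1\}$ gives $\left|\tfrac{\#(\cdots)}{N} - p^{-k}\right| \le \tfrac1N$ uniformly in $z$ and, crucially, in $k$. Taking the supremum yields $D_N(x_n) \le 1/N = \mathcal{O}(1/N)$.

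For necessity I would argue by contraposition. If $f$ fails to permute $\mathbb{Z}/p\mathbb{Z}$ or $\mathbb{Z}/p^2\mathbb{Z}$, then at the corresponding level $k\in\{1,2\}$ the induced map is not a bijection; being a self-map of a finite set, it is then not injective, so by pigeonhole some class $z_0 \bmod p^k$ has $m \ge 2$ preimages in one period. Periodicity gives $\#\{n \le N : f(n)\equiv z_0\} = m\,N/p^k + \mathcal{O}(1)$, whence
$$\left|\frac{\#\left(D_p(z_0,p^{-k}) \cap \{x_1,\dots,x_N\}\right)}{N} - \frac{1}{p^k}\right| \ge \frac{m-1}{p^k} - \frac{C}{N} \ge \frac{1}{p^k} - \frac{C}{N}.$$
Since $k$ is fixed and $(m-1)/p^k$ is a positive constant, this forces $N\,D_N(x_n) \to \infty$, so $D_N(x_n) \ne \mathcal{O}(1/N)$.

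I expect the main obstacle to be the sufficiency direction, and specifically the equivalence between the two finite congruence conditions and the infinite family of conditions $\bmod p^k$: one must show both that permuting $\mathbb{Z}/p^2\mathbb{Z}$ already enforces $f'$ having no root $\bmod p$, and that this derivative condition propagates to all higher powers by Hensel lifting. A secondary technical point, easy to overlook, is that the error term in the sufficiency estimate must be uniform across the infinitely many scales $k$ entering the supremum defining $D_N$; the explicit boundary bound $|\varepsilon p^k - r| < p^k$ is exactly what secures this uniformity.
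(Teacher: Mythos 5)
Your proposal is correct and follows essentially the same route as the paper: balls in $\mathbb{Z}_p$ are identified with residue classes, the two hypotheses are promoted to permutation behaviour $\bmod\ p^k$ for all $k$ via the derivative/Hensel criterion (which the paper cites as N\"obauer's result, Proposition~\ref{prop:criterion_permutation}, rather than reproving it as you do), and the one-hit-per-period count gives the uniform $\mathcal{O}(1/N)$ bound. The only cosmetic difference is in the necessity direction, where you exhibit an over-represented class by pigeonhole while the paper uses a missed class to produce an empty ball; both arguments exploit that a non-bijective self-map of a finite set fails injectivity and surjectivity simultaneously, and both yield a constant lower bound on the discrepancy.
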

Recall that a polynomial $f: \mathbb{Z} \to \mathbb{Z}$ is called a permutation polynomial $\textrm{mod} \ n$ for $n \in \mathbb{N}$ if it is bijection on $\mathbb{Z} / n\mathbb{Z}$ and thus a permutation of the elements in $\mathbb{Z} / n\mathbb{Z}$. It is now not difficult to use Theorem~\ref{thm:disc:kron:quant} to construct further explicit examples of low-discrepancy sequences in $\mathbb{Z}_p$.
\begin{proposition} \label{prop:examples} Let $p$ be a prime. Then $x^p+ax+b$ with $a \in \mathbb{Z}_p^x, a+1 \in \mathbb{Z}_p^x$ and $b \in \mathbb{Z}_p$ is a permutation polynomial $\mod p^n$ for all $n \geq 1$. Thus $(x_n) = (f(n))$ is a low-discrepancy sequence in $\mathbb{Z}_p$.
\end{proposition}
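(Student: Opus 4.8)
The plan is to establish the stronger claim that $f(x)=x^p+ax+b$ is a permutation polynomial modulo $p^n$ for every $n\ge 1$, from which the low-discrepancy statement follows immediately by Theorem~\ref{thm:disc:permutation} (which only needs the cases $n=1,2$). The key starting observation is that modulo $p$ the polynomial collapses to a linear one: Fermat's little theorem gives $x^p\equiv x \pmod p$ for every $x\in\mathbb{Z}$, hence $f(x)\equiv (1+a)x+b \pmod p$. Since $a+1$ is a unit, the coefficient $1+a$ is invertible modulo $p$, so $x\mapsto (1+a)x+b$ is a bijection on $\mathbb{Z}/p\mathbb{Z}$ and the base case $n=1$ holds.

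To pass from the prime modulus to prime powers I would invoke the standard Hensel-type lifting criterion: a polynomial that is a permutation modulo $p$ and whose derivative has no zero modulo $p$ is automatically a permutation modulo $p^n$ for all $n$. Here $f'(x)=px^{p-1}+a\equiv a \pmod p$ is a nonzero constant because $a$ is a unit, so $f'(c)\not\equiv 0\pmod p$ for every $c$. The lifting is a short induction on $n$: assuming $f$ is a bijection modulo $p^n$, any two arguments congruent modulo $p^n$ can be written as $x$ and $x+p^n t$, and a Taylor expansion gives
$$f(x+p^n t)=f(x)+f'(x)\,p^n t+\sum_{i\ge 2} g_i(x)\,(p^n t)^i,$$
where the coefficients $g_i(x)=\sum_j c_j\binom{j}{i}x^{j-i}\in\mathbb{Z}[x]$ are the Hasse derivatives of $f$. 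For $n\ge 1$ every summand with $i\ge 2$ is divisible by $p^{2n}$ and hence vanishes modulo $p^{n+1}$, so that $f(x+p^n t)\equiv f(x)+f'(x)\,p^n t\pmod{p^{n+1}}$. If the two values coincide modulo $p^{n+1}$, then $f'(x)\,t\equiv 0\pmod p$, and since $f'(x)$ is a unit this forces $t\equiv 0\pmod p$, i.e.\ $x+p^n t\equiv x\pmod{p^{n+1}}$. Injectivity on the finite set $\mathbb{Z}/p^{n+1}\mathbb{Z}$ yields bijectivity, closing the induction.

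The only genuinely delicate point is the vanishing of the higher-order terms, which rests on the elementary inequality $2n\ge n+1$ for $n\ge 1$ together with the integrality of the coefficients $\binom{j}{i}$; this is exactly what makes a single Newton step suffice to lift uniquely at each level, and it is where the hypothesis that $a$ be a unit (so that $f'$ never vanishes modulo $p$) is used. With the permutation property modulo $p$ and modulo $p^2$ established, Theorem~\ref{thm:disc:permutation} gives $D_N(f(n))=\mathcal{O}(1/N)$, so $(f(n))$ is a $p$-adic low-discrepancy sequence. It is worth recording that the two hypotheses play complementary roles: $a\in\mathbb{Z}_p^\times$ secures the derivative condition, while $a+1\in\mathbb{Z}_p^\times$ secures bijectivity modulo $p$.
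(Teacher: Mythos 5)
Your proof is correct, and its skeleton matches the paper's: Fermat's little theorem collapses $f$ to $(1+a)x+b$ modulo $p$ (bijective because $a+1\in\mathbb{Z}_p^\times$), and the fact that $f'(x)=px^{p-1}+a\equiv a\not\equiv 0\pmod p$ supplies the derivative condition needed for lifting. The difference lies in how the lifting is justified. The paper simply invokes N\"obauer's criterion (Proposition~\ref{prop:criterion_permutation}) to get the permutation property modulo $p^2$, and the extension to all moduli $p^n$ is delegated to Hensel's lemma as recorded in the proof of Theorem~\ref{thm:disc:permutation}. You instead prove the lifting from scratch: an induction on $n$ via the Taylor--Hasse expansion $f(x+p^nt)\equiv f(x)+f'(x)\,p^nt\pmod{p^{n+1}}$, where the higher-order terms die because they carry a factor $p^{2n}$ and $2n\ge n+1$, and injectivity on the finite ring $\mathbb{Z}/p^{n+1}\mathbb{Z}$ then gives bijectivity. (One cosmetic point: since $a,b\in\mathbb{Z}_p$, the Hasse derivatives $g_i$ have coefficients in $\mathbb{Z}_p$ rather than $\mathbb{Z}$; the divisibility argument is unaffected.) This makes your write-up self-contained and directly establishes the full claim ``permutation modulo $p^n$ for all $n\ge 1$,'' in effect reproving the needed direction of the cited Proposition~\ref{prop:criterion_permutation}; the paper's version is shorter precisely because it outsources that step. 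Both arguments use the hypotheses in the complementary way you point out: $a+1\in\mathbb{Z}_p^\times$ for the base case modulo $p$, and $a\in\mathbb{Z}_p^\times$ for the nonvanishing of the derivative.
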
 
More generally, N\"obauer gave a constructional (but nonetheless implicit) proof in \cite{Noe66} that for all primes $p$ there are infinitely permutation polynomials $\mod p^n$ for all $n \in \mathbb{N}$. Although the examples from Proposition~\ref{prop:examples} are clearly not covered by Theorem~\ref{thm:disc:kron:quant} and also the reductions of the elements $f(1), f(2), \ldots, f(p^k) \mod p^k$ are different from this case, there is a slightly more subtle connection. Therefore, the examples from Proposition~\ref{prop:examples} cann ot be regarded as entirely new. In fact, the answer to the question whether a polynomial yields a low-discrepancy sequence can be purely read off from two associated polynomials of degree $\leq p-2$.
\begin{theorem} \label{thm:main} Let $f(x) = a_k x^k + a_{k-1} x^{k-1} + \ldots a_1 x^1 + a_0$ be a polynomial and set $a_{k+n} = 0$ for all $n \in \mathbb{N}$. We define the associated polynomials
$$g_1(x) = \sum_{k=0}^{p-2} \left(\sum_{j=0}^\infty  a_{k+j\cdot (p-1)} \right) x^k $$
and
$$g_2(x) = \sum_{k=0}^{p-2}  \left(\sum_{\substack{j=0\\ k+1-j \not \equiv p}}^\infty   (k+1-j) a_{k+1+j\cdot (p-1)} \right) x^k.$$
Then $x_n = f(n)$ is a low-discrepancy sequence if and only if $g_1(n)$ is a permutation polynomial and $g_2(n)$ does not have a solution $g_2(n) \equiv 0 \mod p$
\end{theorem}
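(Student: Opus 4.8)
The plan is to route everything through Theorem~\ref{thm:disc:permutation}, which already converts the analytic estimate $D_N(x_n)=\mathcal{O}(1/N)$ into the arithmetic statement that $f$ be a permutation polynomial $\bmod\ p$ and $\bmod\ p^2$. So the entire task reduces to re-expressing these two congruential conditions through the two low-degree polynomials $g_1,g_2$. The first step I would take is the classical lifting criterion: a polynomial is a permutation $\bmod\ p^2$ exactly when it is a permutation $\bmod\ p$ and its formal derivative $f'$ has no zero $\bmod\ p$. This follows from the Taylor expansion $f(a+tp)\equiv f(a)+tp\,f'(a)\pmod{p^2}$: once $f$ permutes $\mathbb{Z}/p\mathbb{Z}$, it permutes $\mathbb{Z}/p^2\mathbb{Z}$ iff on each fibre $\{a+tp:0\le t<p\}$ the affine map $t\mapsto f(a)+tp\,f'(a)$ is a bijection, i.e.\ iff $f'(a)\not\equiv 0$. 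Thus it suffices to prove the equivalence of ``$f$ is a permutation $\bmod\ p$ and $f'$ has no root $\bmod\ p$'' with ``$g_1$ is a permutation and $g_2$ has no root''.

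Next I would pass from $f$ and $f'$ to genuine functions on $\mathbb{F}_p=\mathbb{Z}/p\mathbb{Z}$ via Fermat's little theorem $n^{p}\equiv n$, equivalently $n^{p-1}\equiv 1$ for $n\not\equiv 0$. Collapsing every exponent modulo $p-1$ folds the coefficients of $f$ into exactly the inner sums $\sum_j a_{k+j(p-1)}$ appearing in $g_1$, so that $g_1$ agrees with $f$ as a function on the units $\mathbb{F}_p^\times$; the analogous bookkeeping for $f'=\sum_m m a_m x^{m-1}$, weighting the $m$-th monomial by $m\equiv k+1-j\pmod p$ and discarding the terms with $p\mid m$ (which already vanish in $f'$), produces the inner sums defining $g_2$ and makes $g_2$ agree with $f'$ on $\mathbb{F}_p^\times$. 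Since both the permutation property and the no-root property are invariant under replacing a polynomial by any other representative of the same function on $\mathbb{F}_p$, on the units everything matches tautologically, and both biconditionals collapse to statements about the single residue $n\equiv 0$.

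The hard part is precisely this residue class $n\equiv 0$: the reduction $n^{p-1}\to 1$ is legitimate only for units, so at $n=0$ the degree-$\le p-2$ polynomials $g_1,g_2$ need not return $f(0)=a_0$ or $f'(0)=a_1$. The discrepancy is controlled by the coefficient that the reduction attaches to $x^{p-1}$, which by the power-sum identity $\sum_{x\in\mathbb{F}_p}x^{\,j}\equiv -1$ for $(p-1)\mid j,\ j>0$ equals $-\sum_x f(x)$, respectively $-\sum_x f'(x)$. The decisive structural input to absorb this is Hermite's criterion: a reduced permutation polynomial of $\mathbb{F}_p$ has degree at most $p-2$, so as soon as $f$ is a permutation its $x^{p-1}$-coefficient vanishes and $g_1$ coincides with $f$ on all of $\mathbb{F}_p$, including $0$; this is exactly why truncating $g_1$ at degree $p-2$ is harmless. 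I expect the genuine obstacle of the write-up to be establishing the parallel compatibility for the derivative test, namely that the constant-term correction built into $g_2$ (the role of the $\not\equiv$ condition in its inner sum) is calibrated so that ``$g_2$ has no root'' faithfully records ``$f'$ has no zero $\bmod\ p$'' at $n\equiv 0$ as well. Pinning down this single-point reconciliation — and stress-testing it against the explicit families of Proposition~\ref{prop:examples} — is where all the delicacy lies, since away from it the equivalence is formal.
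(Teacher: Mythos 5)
Your skeleton is the same as the paper's: reduce the analytic statement to Theorem~\ref{thm:disc:permutation} plus N\"obauer's lifting criterion (Proposition~\ref{prop:criterion_permutation}), then identify $f$ with $g_1$ and $f'$ with $g_2$ by collapsing exponents via Fermat's little theorem. You go beyond the paper in one respect: you notice that $n^{p-1}\equiv 1 \pmod p$ holds only for units, so the identifications are valid a priori only on the nonzero residues, and the class $n\equiv 0$ must be reconciled separately --- a point the paper's own three-line proof passes over in silence.

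However, your proposed resolution does not close this gap, and in fact it cannot be closed. Hermite's criterion gives only the forward implication: if $f$ is a permutation polynomial $\bmod\ p$, then $\sum_{x} f(x)\equiv 0$, hence the $x^{p-1}$-coefficient $c:=\sum_{j\ge 1}a_{j(p-1)}$ of the reduction of $f$ modulo $x^p-x$ vanishes $\bmod\ p$, and $g_1$ agrees with $f$ everywhere, including at $0$. But the ``if'' direction of the theorem needs the converse: that $g_1$ being a permutation (together with $g_2$ having no root) forces $f$ to be a permutation, and nothing there forces $c\equiv 0$. Concretely, take $p=3$ and $f(x)=x^4+x^2+x$. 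Then $g_1(x)=x+2$ is a permutation polynomial $\bmod\ 3$ and $g_2(x)\equiv 1$ has no root, yet $f(0)\equiv f(1)\equiv 0 \pmod 3$, so $f$ is not a permutation $\bmod\ 3$, and by Theorem~\ref{thm:disc:permutation} the sequence $(f(n))$ is not low-discrepancy. So the ``single-point reconciliation'' you flag as the locus of all delicacy is not a technical hurdle but an actual failure: the equivalence (and hence the statement as written) is false without an extra hypothesis such as $\sum_{j\ge 1}a_{j(p-1)}\equiv 0 \pmod p$, together with the analogous correction relating $g_2(0)$ to $f'(0)=a_1$. Your proposal is also incomplete on its own terms, since you explicitly defer the $g_2$ reconciliation rather than carry it out. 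Note that the same example defeats the paper's published proof, which implicitly treats $x^{p-1}\equiv 1$ as an identity on all of $\mathbb{Z}/p\mathbb{Z}$: your instinct about where the danger lies was exactly right, but the correct conclusion is that the claimed criterion needs to be repaired, not merely that its proof needs more care.
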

Coming back to Proposition~\ref{prop:examples}, we see that $g_1(x) = (a+1)x + b$ which is a permutation polynomial if $a+1 \not \equiv 0 \mod p$. Moreover $g_2(x) = a \not \equiv 0 \mod p$ because $a \in \mathbb{Z}_p^x$. Hence it follows from a result by N\"obauer, see \cite{Noe65}, which we  will mention as Proposition~\ref{prop:criterion_permutation}, that Proposition~\ref{prop:examples} is a direct consequence of Theorem~\ref{thm:main}.\\[12pt]
More specifically the property of being a low-discrepancy sequence in $\mathbb{Z}_3$ can be purely read off from two associated polynomials of the form $ax+b$. An explicit version for this case is formulated in Corollary~\ref{cor:p=3}. Also in the general case we see that only an understanding of the finitely many polynomials of degree $k \leq p-2$ is necessary. If one looks more closely at the construction of the already mentioned infinitely many permutation polynomials ($\textrm{mod} \ p^n$ for all $n \in \mathbb{N}$) according to \cite{Noe66}, then this is achieved by higher degree polynomials which is in line with the assertion from Theorem~\ref{thm:main}.\\[12pt]
By means of Corollary~\ref{cor:p=3} and Theorem \ref{thm:disc:kron:quant} we are thus in the position for any polynomial $f$ to decide if it yields a low-discrepancy sequence $(x_n) = (f(n))$ in $\mathbb{Z}_3$ or not. In other words, the discrepancy of polynomial sequences can be completely classified in this situation. Of course, it would be desirable to extend this classification to arbitrary primes. However, this would demand a complete classification of permutation polynomials up to degree $p-2$, which, of course, can be done by an explicit calculation for a given $p$ but seems to be currently out of reach without performing these explicit calculations, compare \cite{Hou15}.\\[12pt] 
Nonetheless, permutation polynomials up to degree $6$ have been completely classified in the following way: on the class of permutation polynomials an equivalence relation is defined via $f \sim g$ if and only if $f = A \circ g \circ B$ for some affine maps $A,B$ of the form $ax+b$ with $a \not \equiv 0 \mod p$. Under this equivalence Dickson classified \emph{all} permutation polynomials of degree $\leq 5$ and for \emph{all odd} $p$ of degree $6$ in the classic paper \cite{Dic96}. The list was rather recently verified in \cite{SW13}. Based on this, we prove the following result yielding a list of low-discrepancy which are now truly different from Theorem~\ref{thm:disc:kron:quant} in the sense that they can also not be identified by making use of Theorem~\ref{thm:main}.
\begin{theorem} \label{thm:examples_small_degree}
    The only polynomials of degree $2 \leq d \leq 6$ apart from those in Proposition~\ref{prop:examples} such that $(x_n) = (f(n))$ is a low-discrepancy sequence in $\mathbb{Z}_p$ are listed in Table~\ref{tab:low-discrepancy_polynomials}, where $m \in \mathbb{N}$ is arbitrary. 
    \begin{table}[h]
\centering
    \begin{tabular}{c|c}
    $f(x)$ & $p$\\
    \hline
       $x^5 + 2ax^3 +a^2x, a \not \equiv y^2$ & $5$\\
       $x^6 \pm 2x$ & $11$\\
       $x^6 \pm 4x$ & $11$\\
       $x^5+ax^3+5^{-1}a^2x, a \not \equiv 0$ & $5m \pm 2$\\
    \end{tabular}
    \caption{Polynomials of degree $2 \leq d \leq 6$ generating low-discrepancy sequences in $\mathbb{Z}_p$}
    \label{tab:low-discrepancy_polynomials}
\end{table}
\end{theorem}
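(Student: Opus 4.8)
The plan is to turn the question into a finite search over Dickson's classification. By Theorem~\ref{thm:disc:permutation}, $(x_n)=(f(n))$ is a low-discrepancy sequence exactly when $f$ is a permutation polynomial $\mod p$ \emph{and} $\mod p^2$, and by the criterion of Proposition~\ref{prop:criterion_permutation} (equivalently, by the $g_1,g_2$ reformulation of Theorem~\ref{thm:main}) the second condition amounts, on top of being a permutation $\mod p$, to the derivative $f'$ having no root $\mod p$. The first thing I would record is that this combined property is invariant under the Dickson equivalence $f \sim A\circ f \circ B$ with $A(x)=\alpha x+\beta$, $B(x)=\gamma x+\delta$ and $\alpha,\gamma\not\equiv 0\mod p$: such affine maps are themselves permutations $\mod p^n$, so composing with them preserves the permutation property, and since $(A\circ f\circ B)'(x)=\alpha\gamma\,f'(B(x))$ with $\alpha\gamma$ a unit and $B$ a bijection, it also preserves the nonvanishing of the derivative. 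Hence it suffices to test a single representative of each Dickson class.

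Next I would go through the normal forms of permutation polynomials of each degree $d$ with $2\le d\le 6$ from \cite{Dic96}, as verified in \cite{SW13}, in two steps: first read off the congruences on $p$ for which the form is a permutation $\mod p$, then impose that $f'$ be root-free $\mod p$. The decisive filter is that a monomial $x^d$ with $d\ge 2$ has $f'(0)=0$, so it never lifts to $\mod p^2$; this discards every class equivalent to a monomial, and more generally every form whose derivative picks up a root. What remains are precisely the forms with nowhere-vanishing derivative, and translating this into conditions on the parameters and on $p$ yields the entries of Table~\ref{tab:low-discrepancy_polynomials}. For instance $f(x)=x^5+2ax^3+a^2x$ gives $f'(x)\equiv a(x^2+a)\mod 5$ (the $x^5$-term contributing $5x^4\equiv 0\mod 5$, exactly the exclusion built into $g_2$), which is root-free precisely when $a$ is a nonsquare, matching the listed condition $a\not\equiv y^2$; the entries at $p=11$ and at $p\equiv\pm2\mod 5$ arise the same way from the degree-$6$ and degree-$5$ families.

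Finally I would remove the sequences already recorded in Proposition~\ref{prop:examples}, namely those equivalent to $x^p+ax+b$; in the range $2\le d\le 6$ these occur only at $p\in\{2,3,5\}$ (where $x^p+ax$ reduces as a function to an affine map but still lifts thanks to the constant derivative contributed by $ax$), and they are excluded by hypothesis. Collecting the surviving forms, understood up to the equivalence shown above to preserve the property, gives the stated list.

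I expect the main obstacle to be the case analysis of the degree-$5$ and degree-$6$ families, where membership in the class of permutation polynomials already depends on delicate congruences of $p$ (the source of the conditions $p=11$ and $p\equiv\pm2\mod 5$), so that for each form one must simultaneously pin down the permutation property and the derivative condition and keep careful track of which pairs $(p,\text{parameters})$ satisfy both. A secondary point is that Dickson's degree-$6$ classification is stated only for odd $p$, so the prime $p=2$ must be checked separately.
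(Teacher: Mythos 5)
Your overall strategy is the paper's strategy: use Theorem~\ref{thm:disc:permutation} together with Proposition~\ref{prop:criterion_permutation} to translate ``low-discrepancy'' into ``permutation polynomial $\bmod\ p$ whose derivative has no root $\bmod\ p$,'' and then filter Dickson's classification of permutation polynomials of degree $\le 6$. Two of your additions are genuine improvements in rigor over the paper's own write-up: the explicit check that the combined property is invariant under the Dickson equivalence $f\mapsto A\circ f\circ B$ (the paper instead observes that $f'(0)=a_1\not\equiv 0$ is necessary and invokes its tailored Theorem~\ref{thm:dickson}, whose literal ``only'' formulation is questionable since Dickson's lists are normal forms up to equivalence), and the observation that Dickson's degree-$6$ classification covers only odd $p$, so $p=2$ needs separate treatment --- a point the paper silently ignores.

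There is, however, one genuine gap: the fourth entry of Table~\ref{tab:low-discrepancy_polynomials}, namely $x^5+ax^3+5^{-1}a^2x$ for \emph{all} primes $p\equiv \pm 2 \pmod 5$, cannot ``arise the same way'' as the entries at $p=5$ and $p=11$. For a fixed prime, deciding whether $f'$ has a root $\bmod\ p$ is a finite computation, but here the claim concerns infinitely many primes simultaneously: one must show that $f'(x)=5x^4+3ax^2+5^{-1}a^2$ has no root in $\mathbb{F}_p$ for every $p\equiv\pm2\pmod 5$ and every $a\not\equiv 0$, and your plan supplies no mechanism for such a uniform statement. An actual argument is needed, for instance: writing $y=x^2$, the polynomial $5f'$ becomes $25y^2+15ay+a^2$, whose discriminant is $225a^2-100a^2=5\,(5a)^2$; this is a square in $\mathbb{F}_p$ if and only if $5$ is a quadratic residue $\bmod\ p$, which by quadratic reciprocity happens exactly when $p\equiv\pm1\pmod 5$, so for $p\equiv\pm2\pmod 5$ the derivative is root-free. (The paper argues differently, via Dickson's assertion that this form permutes the \emph{fields} $\mathbb{F}_q$ for prime powers $q\equiv\pm2\pmod 5$ together with Hensel's lemma.) Relatedly, your plan also leaves unexecuted the complementary half of the filtering --- showing that the derivatives of the rejected families (e.g.\ $x^5+ax^3\pm x^2+3a^2x$ at $p=7$, $x^5+ax^3+3a^2x$ at $p=13$, and the mixed degree-$6$ forms at $p=11$) do acquire roots for every admissible $a$ --- but those are at least finite checks for fixed $p$, which the paper records in its table of derivatives; the $5m\pm2$ family is the one place where a new idea, not more computation, is required.
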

Here $a \not \equiv y^2$ means that $a$ is not a square $\mod p$ and normalized refers to the leading coefficient being equal to $1$. However, multiplication by units (or more generally composition with affine maps, see above) does not change the property of being a permutation polynomial. Moreover, note that the first polynomial $x^5+2ax^3+a^2x$ is associated to a polynomial of degree $3$ by Theorem~\ref{thm:main}.\\[12pt]
Originally, research on this paper was motivated by the search for examples of sequences with (strong) p-adic Poissonian pair correlations, see \cite{Wei23c} or Definition~\ref{def:PPC} for details. It seemed promising to the author to try to find examples in $\mathbb{Z}_p$ generated by polynomials. Indeed, the class of examples which we constructed in this paper are low-discrepancy sequences and hence have weak p-adic Poissonian pair correlations for all $0 < \alpha < 1$ according to \cite{Wei23c}, Proposition 4.1. However this does not hold for $\alpha = 1$ which would correspond to (strong) p-adic Poissonian pair correlations. This is clear in the case when $f$ is not a permutation polynomial, compare Remark~\ref{rem:uniform}, and is also true in general. Therefore, it is impossible to find an example $(x_n)=(f(n))$ having Poissonian pair correlations as it was originally intended by the author.
\begin{theorem} \label{thm:poissonian} Let $f$ be an arbitrary polynomial. Then the sequence $(x_n) = (f(n))$ does not have p-adic Poissonian pair correlations.
\end{theorem}
This paper is organized as follows: in Section~\ref{sec:pol_disc} we discuss permutation polynomials and their role in p-adic discrepancy theory. Afterwards we explain in Section~\ref{sec:PPC}, why polynomial sequences necessarily fail to have p-adic Poissonian pair correlations. Finally, we briefly discuss as an application how our results can be used to construct (new) real low-discrepancy sequences in Section~\ref{sec:real}.


\section{Permutation Polynomials and Discrepancy} \label{sec:pol_disc}

Let us start by fixing some notation and discussing basic properties of the p-adic numbers. Let $p \in \mathbb{Z}$ be a prime number. For $a = \frac{b}{c}$ with $b,c \in \mathbb{Z} \setminus \{ 0 \}$, let $m$ be the highest possible power with $a = p^m \frac{b'}{c'}$ and $(b'c',p)=1$. Then the p-adic absolute value of $a$ is given by
$$|a|_p:= p^{-m}$$
and the p-adic numbers $\mathbb{Q}_p$ are the completion of $\mathbb{Q}$ with respect to $|\cdot|_p$. The analogue of $[0,1] \subset \mathbb{R}$ are the p-adic integers
$$\mathbb{Z}_p:=\left\{ x \in \mathbb{Q}_p \, : \, \left|x\right|_p \leq 1 \right\}$$
which form a subring of $\mathbb{Q}_p$. They are the closure of $\mathbb{Z}$ in the field $\mathbb{Q}_p$ The field $\mathbb{Q}_p$ is a countable union of copies of $\mathbb{Z}_p$ (as $\mathbb{R}$ is a countable union of copies of $[0,1]$) which is given by
$$\mathbb{Q}_p = \bigcup_{m \geq 0} p^{-m} \mathbb{Z}_p.$$
Finally, the ring of units are the elements  $\mathbb{Z}_p^\times:=\left\{ x \in \mathbb{Z}_p \, : \, \left|x\right|_p = 1 \right\}$, i.e. those which are not divisbile by $p$.\\[12pt]
Before we come to the general theory for sequences $(x_n) = (f(n))$, where $f$ is a polynomial, we turn to two rather simple classes of examples, one positive and one negative. The existence proof relies on Theorem~\ref{thm:disc:permutation} and a proposition which goes back to N\"obauer in \cite{Noe66}.
\begin{proposition} \label{prop:criterion_permutation} A polynomial is a permutation polynomial $\mod p^2$ for a prime $p$ if and only if it is a permutation polynomial $\mod p$ and the congruence $P'(x) \equiv 0 \mod p$ has no solutions.
\end{proposition}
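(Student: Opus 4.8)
The plan is to reduce the statement about $\mathbb{Z}/p^2\mathbb{Z}$ to a fibrewise analysis over the reduction map $\mathbb{Z}/p^2\mathbb{Z} \to \mathbb{Z}/p\mathbb{Z}$ by means of a truncated Taylor expansion. First I would write each element of $\mathbb{Z}/p^2\mathbb{Z}$ uniquely as $a + pt$ with $a, t \in \{0,1,\ldots,p-1\}$ and expand $P(a+h) = \sum_k c_k(a)\, h^k$. Because $P$ has integer coefficients, each $c_k(a)$ is an integer (an integer-linear combination of the coefficients of $P$), with $c_0(a) = P(a)$ and $c_1(a) = P'(a)$. Substituting $h = pt$ and reducing modulo $p^2$ annihilates every term with $k \geq 2$, since $p^k t^k \equiv 0 \pmod{p^2}$ there, yielding the key identity
$$P(a+pt) \equiv P(a) + P'(a)\, pt \pmod{p^2}.$$

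Next I would use this identity to organise the map $x \mapsto P(x) \bmod p^2$ according to the fibres of reduction. Reducing the identity modulo $p$ gives $P(a+pt) \equiv P(a) \pmod p$, so the residue of the image depends only on $a$. Hence if $P$ is a bijection on $\mathbb{Z}/p^2\mathbb{Z}$, the induced map on $\mathbb{Z}/p\mathbb{Z}$ must be a bijection as well, which already forces $P$ to be a permutation polynomial $\bmod p$; this settles one half of the necessity for free. Conversely, assuming $P$ is a permutation polynomial $\bmod p$, for each target residue $r \in \mathbb{Z}/p\mathbb{Z}$ there is exactly one $a$ with $P(a) \equiv r \pmod p$, and the $p$ preimage candidates $a + pt$ must map onto the $p$ elements of $\mathbb{Z}/p^2\mathbb{Z}$ lying above $r$.

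The core of the argument is then to decide when the restricted map $t \mapsto P(a) + P'(a)\, pt \pmod{p^2}$ is a bijection of this $p$-element fibre. Writing $P(a) = r + ps$ and dividing the congruence by $p$, this map is conjugate to $t \mapsto s + P'(a)\, t \pmod p$, which is a bijection of $\mathbb{Z}/p\mathbb{Z}$ exactly when $P'(a) \not\equiv 0 \pmod p$. Assembling the fibres, $P$ is a permutation polynomial $\bmod p^2$ if and only if it is one $\bmod p$ and $P'(a) \not\equiv 0 \pmod p$ holds for every $a$, which is precisely the assertion that $P'(x) \equiv 0 \pmod p$ has no solution.

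The main obstacle I anticipate is bookkeeping rather than conceptual: justifying the truncated Taylor identity cleanly for integer polynomials (so that the coefficients $c_k(a)$ are genuinely integers and the terms with $k \geq 2$ really vanish modulo $p^2$, with no hidden difficulty at $p=2$), and phrasing the fibrewise argument so that it delivers both directions of the equivalence simultaneously. Everything beyond this is a finite counting argument on $\mathbb{Z}/p^2\mathbb{Z}$.
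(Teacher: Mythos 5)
Your proof is correct and complete. Note, however, that there is nothing in the paper to compare it against: the paper does not prove Proposition~\ref{prop:criterion_permutation} at all, but quotes it as a classical criterion going back to N\"obauer (the sentence ``The proof is now a direct consequence of the two mentioned results'' that follows the proposition refers to the proof of Proposition~\ref{prop:examples}, not of the criterion itself). So you have in effect supplied the argument the paper delegates to a reference. Your route is the standard one, and every step checks out: the coefficients $c_k(a)$ of $P(a+X)$ are integers because $P(a+X)\in\mathbb{Z}[X]$, so the truncated Taylor identity $P(a+pt)\equiv P(a)+P'(a)\,pt \pmod{p^2}$ holds for every prime, including $p=2$, since $p^k\equiv 0 \pmod{p^2}$ for all $k\geq 2$; reduction mod $p$ shows that fibres of $\mathbb{Z}/p^2\mathbb{Z}\to\mathbb{Z}/p\mathbb{Z}$ map into fibres, so bijectivity mod $p^2$ forces bijectivity mod $p$; and on a single fibre the map becomes the affine map $t\mapsto s+P'(a)\,t$ on $\mathbb{Z}/p\mathbb{Z}$, which is a bijection precisely when $P'(a)\not\equiv 0 \pmod p$ and is constant otherwise. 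Gluing the fibre maps (injectivity on each fibre plus the count of $p$ elements per fibre in the forward direction, and the unique-preimage argument in the converse) delivers both implications simultaneously, as you claim. It is also worth observing that your fibre computation is exactly the first nontrivial step of the Hensel-lifting argument that the paper invokes elsewhere (in the proof of Theorem~\ref{thm:disc:permutation}) to pass from a permutation mod $p^2$ to a permutation mod $p^k$ for all $k>2$, so your proof fits seamlessly into the paper's logical structure.
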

The proof is now a direct consequence of the two mentioned results.
\begin{proof}[Proof of Proposition~\ref{prop:examples}] First we consider the congruence $\mod p$. Note that $x^{p} \equiv x \mod p$ by Fermat's little Theorem. This means that $x^p+ax \equiv (1+a)x \mod p$. Therefore, $f(x)$ is a permutation polynomial if $p \nmid 1+a$. In order to check if $f(x)$ is also a permutation polynomial $\mod p^2$, we only need to show that the derivative of $f'(x) = px^{p-1} + a$ does not have a solution $f'(x) \equiv 0 \mod p$ by Proposition~\ref{prop:criterion_permutation}. Since $f'(x) \equiv a \mod p$ and $a \in \mathbb{Z}_p^x$ this holds automatically.
\end{proof}
Indeed, the argument does not work without the linear term as the following proposition implies although the monic polynomials $x^n$ are standard examples of permutation polynomials $\mod p$ if $(n,p-1) = 1$. 
\begin{proposition} \label{prop:non-example} Let $p$ be an arbitrary prime number. Polynomials of the form $f(x) = ax^n +b$ with $a \in \mathbb{Z}_p^x$ are never permutation polynomials $\mod p$ and $\mod p^2$ if $n > 1$. 
\end{proposition}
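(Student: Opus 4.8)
The plan is to invoke Proposition~\ref{prop:criterion_permutation}, which reduces the question of being a permutation polynomial $\mod p^2$ to two conditions: being a permutation polynomial $\mod p$, and the derivative having no root $\mod p$. Since I only need to show that $f(x) = ax^n + b$ fails to be a permutation polynomial simultaneously $\mod p$ and $\mod p^2$, it suffices to establish the failure of the second condition, i.e. to produce a solution of the congruence $f'(x) \equiv 0 \mod p$. In that case the equivalence of Proposition~\ref{prop:criterion_permutation} rules out $f$ being a permutation polynomial $\mod p^2$, irrespective of its behaviour $\mod p$.

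First I would compute the derivative, $f'(x) = a n x^{n-1}$. The crucial observation is that for $n > 1$ the exponent $n-1$ is at least $1$, so $x$ divides $f'(x)$ and in particular $f'(0) = 0$. Hence $x \equiv 0 \mod p$ is always a solution of $f'(x) \equiv 0 \mod p$, regardless of the prime $p$ or the unit $a \in \mathbb{Z}_p^\times$. By Proposition~\ref{prop:criterion_permutation} the existence of such a root already forces $f$ not to be a permutation polynomial $\mod p^2$, and therefore $ax^n + b$ can never be a permutation polynomial both $\mod p$ and $\mod p^2$, which is the assertion.

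I do not expect a genuine obstacle here: the statement is essentially immediate once the derivative criterion of Proposition~\ref{prop:criterion_permutation} is available, and the only input is that a monomial of degree $\geq 2$ has vanishing derivative at the origin. The point worth emphasizing is the contrast with the positive examples of Proposition~\ref{prop:examples}: there the presence of the linear term $ax$ makes $f'(x) \equiv a \not\equiv 0 \mod p$ a nonzero constant, so the derivative has no root and the criterion is satisfied. For the pure power $ax^n$ with no linear term, the derivative $a n x^{n-1}$ unavoidably vanishes at $x=0$, and it is precisely this absence of a linear term that is responsible for the failure. This also explains, a posteriori, why the construction in Proposition~\ref{prop:examples} could not dispense with the term $ax$.
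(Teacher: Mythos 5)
Your proof is correct, and it takes a genuinely different (and cleaner) route than the paper's. The paper first invokes the classification of monomial permutation polynomials: for $f(x)=ax^n+b$ to permute $\mathbb{Z}/p\mathbb{Z}$ one needs $(n,p-1)=1$, which for odd $p$ forces $n$ to be odd; it then argues that the derivative $f'(x)=nax^{n-1}$ fails because $n-1$ is even and $(n-1,p-1)\geq 2$ (phrased, somewhat loosely, as $f'$ ``not being a permutation polynomial,'' which is not literally the condition required by Proposition~\ref{prop:criterion_permutation}). You bypass the mod $p$ analysis entirely: since $n>1$, the derivative $anx^{n-1}$ vanishes at $x=0$, so the congruence $f'(x)\equiv 0 \mod p$ always has the solution $x\equiv 0$, and Proposition~\ref{prop:criterion_permutation} immediately rules out $f$ being a permutation polynomial $\mod p^2$, regardless of its behaviour $\mod p$. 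Your route buys three things: it is uniform in $p$ (the paper's parity argument implicitly assumes $p$ odd, since for $p=2$ the condition $(n,p-1)=1$ is vacuous); it needs no input about when monomials permute $\mathbb{Z}/p\mathbb{Z}$; and it yields the stronger conclusion that $ax^n+b$ is \emph{never} a permutation polynomial $\mod p^2$ at all, not merely that it cannot be one simultaneously $\mod p$ and $\mod p^2$. The paper's route, in exchange, records the side fact about exactly when $ax^n+b$ permutes $\mod p$, but that fact is not needed for the conclusion.
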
  
Hence these polynomials cannot yield low-discrepancy sequences $(x_n) = (f(n))$ by Theorem~\ref{thm:disc:permutation}.
\begin{proof} The polynomial $ax^n + b$ is a permutation polynomial if and only if $(n,p-1)=1$ implying that $n$ must be odd. However, then $f'(x) = nax^{n-1}$ cannot be a permutation polynomial because $n-1$ is even and thus $(n-1,p-1) \geq 2.$
\end{proof}
The infinitely many examples of permutation polynomials $\mod p^n$ for all $n \geq 1$ which are obtained by the construction of N\"obauer in \cite{Noe65}, see also Theorem~2.8 in \cite{Nar84}, are constructed by increasing the degree of the polynomial. In the same spirit, Theorem~\ref{thm:main} may not only be regarded as a criterion to check if a given polynomial is a permutation polynomial but also as a possibility to obtain permutation polynomials of arbitrarily large degree.
\begin{proof}[Proof of Theorem~\ref{thm:main}] Since $x^{p-1} \equiv 1 \mod p$, it follows that $x^{j(p-1)+k} \equiv x^k \mod p$. Hence $f(x)$ is a permutation polynomial $\mod p$ if and only if $g_1(n)$ is a permutation polynomial $\mod p$. The derivative of $f(x)$ is
$$f'(x) = \sum_{k=1}^\infty k a_k x^{k-1}$$
which is equivalent $\mod p$ to
$$f'(x) = \sum_{k=1}^\infty [k]_p [a_k]_p x^{k-1}$$
where $[\cdot]_p$ denotes the residue class $\mod p$. By $x^{p-1} \equiv 1 \mod p$, it follows that $f'(x)$ has a root if and only if $g_2(x)$ has a root. Thus the claim follows by Proposition~\ref{prop:criterion_permutation}.
\end{proof}
 In the case $p=3$, Theorem~\ref{thm:main} allows to immediately read off from the coefficients of a given linear polynomial if it yields a low-discrepancy sequence or not. As it settles the situation in the case $p=3$ completely, we formulate this version of Theorem~\ref{thm:main} here explicitly as a corollary.
\begin{corollary} \label{cor:p=3} Let $f(x) = a_k x^k + a_{k-1} x^{k-1} + \ldots a_1 x^1 + a_0$. Define the associated polynomials
$$g_1(x) = (a_0 + a_2 + a_4 +\ldots)  + (a_1 + a_3 + \ldots)  x$$
and
$$g_2(x) = (a_1 + 2a_5 + a_7 + 2a_{11} + \ldots) + (2a_2 + a_4 + 2a_8 + a_{10} +  \ldots) x.$$
Then $x_n = f(n)$ is a low-discrepancy sequence if and only if $g_1(n)$ is a permutation polynomial and $g_2(n)$ does not have a solution $g_2(n) \equiv 0 \mod 3$
\end{corollary}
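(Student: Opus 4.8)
The plan is to derive Corollary~\ref{cor:p=3} as the explicit specialization of Theorem~\ref{thm:main} to the prime $p = 3$, where the whole statement collapses to an elementary bookkeeping of coefficients. Since $p - 1 = 2$ in this case, the index $k$ in both associated polynomials ranges only over $k \in \{0, 1\}$, so that $g_1$ and $g_2$ are automatically \emph{linear}. This is what makes the case $p=3$ so clean: a linear polynomial $\alpha x + \beta$ is a permutation polynomial $\bmod\ 3$ precisely when $\alpha \not\equiv 0 \bmod 3$, and the non-vanishing required of $g_2$ is likewise a transparent congruence condition. Thus, once I have identified the coefficients of $g_1$ and $g_2$ explicitly, no argument beyond Theorem~\ref{thm:main} is needed.

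First I would read off $g_1$. Setting $p = 3$ in the definition gives, for $k = 0$, the coefficient $\sum_{j \geq 0} a_{2j} = a_0 + a_2 + a_4 + \cdots$, and for $k = 1$, the coefficient $\sum_{j \geq 0} a_{1 + 2j} = a_1 + a_3 + a_5 + \cdots$. This is immediate from $x^{2j} \equiv 1$ and $x^{1+2j} \equiv x \pmod 3$ for $x \in \mathbb{Z}_3^\times$, and it matches the stated $g_1$ verbatim.

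The one place requiring care is the coefficient list of $g_2$. Here the contribution of $a_{k+1+2j}$ carries the integer weight $(k+1-j)$, and the summation skips exactly those terms with $k+1-j \equiv 0 \bmod 3$, namely those whose derivative coefficient vanishes mod $3$. The key step is therefore to reduce the weights $(k+1-j)$ modulo $3$ as $j$ increases. For $k = 0$ the weights run through $1, 0, -1, -2, -3, -4, \dots$, which modulo $3$ is the periodic pattern $1, (\text{skip}), 2, 1, (\text{skip}), 2, \dots$; reading this against the monomials $a_1, a_3, a_5, a_7, a_9, a_{11}, \dots$ yields $a_1 + 2a_5 + a_7 + 2 a_{11} + \cdots$. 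For $k = 1$ the weights run $2, 1, 0, -1, -2, -3, \dots \equiv 2, 1, (\text{skip}), 2, 1, (\text{skip}), \dots$ against $a_2, a_4, a_6, a_8, a_{10}, a_{12}, \dots$, giving $2a_2 + a_4 + 2a_8 + a_{10} + \cdots$. These are exactly the coefficients appearing in the corollary.

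Having matched both coefficient lists, the conclusion follows directly: by Theorem~\ref{thm:main} the sequence $x_n = f(n)$ is a low-discrepancy sequence if and only if $g_1$ is a permutation polynomial $\bmod 3$ and $g_2$ has no root $\bmod 3$, and with the expressions just obtained these conditions are precisely the stated ones. I do not anticipate any genuine obstacle here; the only subtlety is keeping the exclusion condition $k+1-j \equiv 0$ synchronized with the periodic reduction of the weights, which is a finite pattern-matching check rather than a conceptual difficulty.
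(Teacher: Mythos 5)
Your proposal is correct and is exactly the paper's (implicit) argument: the paper states Corollary~\ref{cor:p=3} as an immediate specialization of Theorem~\ref{thm:main} to $p=3$ with no separate proof, and your explicit bookkeeping of the weights $(k+1-j) \bmod 3$ and the skipped indices reproduces the stated coefficient lists for $g_1$ and $g_2$ correctly. If anything, your write-up is more detailed than the paper, which leaves the pattern-matching entirely to the reader.
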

Moreover, Theorem~\ref{thm:main} shows that the most relevant polynomials to discover are those of small degree. For general $p$ this is a demanding task which has been solved for degree $\leq 6$ in \cite{Dic96} and \cite{SW13} but which is open for higher degrees, compare~\cite{Hou15}. However, this classification suffices to complete the picture for $p \in \{ 5, 7 \}$ as well. We state it here in a form with an extra condition on the coefficient of the linear term which is tailored to our needs below (and which is more compact than the general result).
\begin{theorem} \label{thm:dickson} The combinations of normalized polynomials $f(x)$ and prime numbers $p$ listed in Table~\ref{tab:dickson} are the only permutation polynomials up to degree $6$ with non-vanishing linear coefficient mod any prime number. 
\begin{table}[h]
\centering
    \begin{tabular}{c|c}
    $f(x)$ & $p$\\
    \hline
       $x^3 - ax, a \not \equiv y^2$ & $3$\\
       $x^4 \pm 3x$ & $7$ \\
       $x^5 - ax, a \not \equiv y^4$ & $5$\\
       $x^5 + ax^3 \pm x^2 +3a^2x, a \not \equiv y^2$ & $7$\\
       $x^5 + ax^3 + 5^{-1}a^2x, a \not \equiv 0$ & $5m \pm 2$\\
       $x^5 + ax^3 +3a^2x, a \not \equiv y^2$ & $13$\\
       $x^5 + 2ax^3 +a^2x, a \not \equiv y^2$ & $5$\\
       $x^6 \pm 2x$ & $11$\\
       $x^6 \pm 4x$ & $11$\\
       $x^6 \pm a^2x^3+ax^2\pm5x, a = y^2$ & $11$\\
       $x^6 \pm 4a^2x^3 + ax^2 \pm 4x, a \not \equiv y^2$ & $11$\   
    \end{tabular}
    \caption{List of normalized permutation polynomials up to degree $6$.}
    \label{tab:dickson}
\end{table}
\end{theorem}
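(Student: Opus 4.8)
The plan is to read the list off from Dickson's classification of permutation polynomials of degree at most $6$, as given in \cite{Dic96} and verified in \cite{SW13}, by passing to normalized representatives and discarding those whose linear coefficient vanishes. Dickson's classification is stated modulo the equivalence $f \sim g$ that holds whenever $f = A \circ g \circ B$ for affine maps $A(y) = \gamma y + \delta$ and $B(x) = \alpha x + \beta$ with $\alpha, \gamma \not\equiv 0 \mod p$. Since both the degree and the permutation property are preserved by this relation, I would work one class at a time: for each representative $g$ produced by Dickson, determine the normalized polynomials (leading coefficient $1$) lying in its class and keep exactly those with non-vanishing linear coefficient.

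The computation that drives everything is the transformation of the linear coefficient. Writing $g(x) = \sum_i c_i x^i$, the coefficient of $x$ in $g(\alpha x + \beta)$ equals $\alpha\, g'(\beta)$ with $g'$ the formal derivative, and post-composition with $A$ multiplies this by $\gamma$; hence the linear coefficient of $A \circ g \circ B$ equals $\gamma \alpha\, g'(\beta)$. The inner shift $\beta$ is forced by the requirement of a depressed normalized form (no term of degree one below the leading one), and the residual scaling $B(x) = \alpha x$, $A(y) = \alpha^{-d} y + \delta$ with $d = \deg g$ only multiplies the linear coefficient by a unit. Thus a class admits a normalized representative with non-vanishing linear coefficient if and only if the linear coefficient of its depressed form, namely $g'(\beta)$ up to a unit at the depression point $\beta$, is non-zero. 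In particular a monomial class such as $x^d$ is always discarded, while a Dickson-polynomial class retains its linear term unless the prime divides the relevant coefficient: this is exactly what happens to $D_5(x,a) = x^5 - 5ax^3 + 5a^2x \equiv x^5 \mod 5$, and it is the reason the two genuine quintic families for $p = 5$ require their own normalized shapes.

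It then remains to run the finite case analysis by degree. Degree $2$ contributes nothing, as $x^2$ is a permutation polynomial only modulo $2$ and has identically vanishing derivative there; degrees $3$ and $4$ yield the rows $x^3 - ax$ for $p = 3$ and $x^4 \pm 3x$ for $p = 7$ once the non-square and congruence conditions from Dickson's criteria are imposed. The main obstacle is the degree-$5$ and degree-$6$ analysis: Dickson's list is longest here, the side-conditions (whether $a$ is a square, a fourth power, or merely non-zero, together with the residue of $p$ modulo $5$ and the distinguished roles of $p = 11$ and $p = 13$) interact delicately with the depression step, and for the small primes several coefficients collapse under reduction so that the degenerate cases must be treated by hand. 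I would dispatch these by substituting the general affine maps into each degree-$5$ and degree-$6$ representative, expanding, and matching against the depressed normalized template, which reproduces precisely the rows of Table~\ref{tab:dickson}.
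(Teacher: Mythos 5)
The first thing to note is that the paper contains no proof of Theorem~\ref{thm:dickson} at all: it is presented as a restatement of Dickson's classification \cite{Dic96}, as re-verified in \cite{SW13}, with the representatives whose linear coefficient vanishes (and the entries over non-prime fields such as $q=9$) deleted. So your overall plan --- quote Dickson's list of representatives for the equivalence $f\sim A\circ g\circ B$ and then filter it --- is the same route, and your formula for the transformed linear coefficient (namely $\gamma\alpha\,g'(\beta)$ for $A(y)=\gamma y+\delta$, $B(x)=\alpha x+\beta$) is correct. The gap lies in your central ``if and only if'': you claim a class admits a normalized member with non-vanishing linear coefficient precisely when its \emph{depressed} form has non-vanishing linear coefficient, i.e.\ when $g'(\beta_0)\neq 0$ at the depression point $\beta_0$. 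Only the ``if'' direction is true; by your own formula the correct criterion is the existence of \emph{some} $\beta$ with $g'(\beta)\not\equiv 0 \bmod p$. Concretely, take $g(x)=x^3$ and $p=5$: the depressed form is $x^3$ itself, with zero linear coefficient, so your rule discards the class (``a monomial class such as $x^d$ is always discarded''), yet $(x+1)^3=x^3+3x^2+3x+1$ is a normalized permutation polynomial $\bmod\ 5$ with linear coefficient $3\not\equiv 0$. This shows that ``non-vanishing linear coefficient'' is \emph{not} invariant under Dickson's equivalence, so no argument that tests it on a single representative per class can succeed; it also shows that the theorem read completely literally fails on this example, so it has to be understood as a statement about class representatives, with a different invariant doing the actual work.

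The invariant that makes the filtering legitimate --- and the one the paper needs downstream in Theorem~\ref{thm:examples_small_degree} via Proposition~\ref{prop:criterion_permutation} --- is ``$f'$ has no root $\bmod\ p$'', not ``the linear coefficient of $f$ is non-zero''. Since $(A\circ g\circ B)'(x)=\gamma\alpha\,g'(\alpha x+\beta)$ and $x\mapsto\alpha x+\beta$ permutes $\mathbb{Z}/p\mathbb{Z}$, root-freeness of the derivative \emph{is} constant on each equivalence class, and it implies non-vanishing linear coefficient because $f'(0)=a_1$. Hence any permutation polynomial of degree $\leq 6$ whose derivative is root-free is equivalent to a Dickson representative $g$ with $g'(0)\neq 0$, i.e.\ to an entry of Table~\ref{tab:dickson}; the discarded entries ($x^d$, $x^5\pm 2x^2$ for $p=7$, etc.) can indeed be discarded, because $0$ is a root of the derivative of such a representative, and therefore \emph{every} polynomial in those classes has a critical point and is irrelevant for the intended application. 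Your depression-based rule happens to output the same table only because Dickson's representatives are already depressed; it does not prove the statement. Replacing your false lemma by this one-line invariance argument closes the gap.
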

Theorem~\ref{thm:examples_small_degree} is now an immediate application of Theorem~\ref{thm:dickson}, Theorem~\ref{thm:disc:permutation} and Proposition~\ref{prop:criterion_permutation}.
\begin{proof}[Proof of Theorem~\ref{thm:examples_small_degree}] Let $f(x) = a_6 x^6 + a_5 x^5 + \ldots + a_1 x + a_0$. Since $f'(n) \equiv 0 \mod p$ may not have a solution, it necessarily holds that $a_1 \not \equiv 0 \mod p$. By the work of Dickson, Theorem~\ref{thm:dickson}, the only permutation polynomials of degree $\leq 6$ and $p$ satisfying this property are listed in Table~\ref{tab:dickson}. 
The first and the third polynomial are already covered by Proposition~\ref{prop:examples} and may therefore be excluded in the following. Moreover, we exclude the case $5m \pm 2$ for a moment. The derivatives of the other polynomials and the answer to the question if a root of these exist, can be found in Table~\ref{tab:derivates} (the expression $\exists x_0(a)$ indicates that a root $x_0$ exists for all $a$ but depends on $a$). 
\begin{table}[h]
    \centering
    \begin{tabular}{c|c|c}
    $f'(x)$ & $p$ & roots\\
    \hline
       $4x^3 + 3$ & $7$ & $1,2, 4$ \\
       $4x^3 - 3$ & $7$ & $3,5, 6$ \\
       $5x^4 + 3ax^2 \pm 2x +3a^2, a \not \equiv y^2$ & $7$ &  $\exists x_0(a)$  \\
       $5x^4 + 3ax^2 +3a^2, a \not \equiv y^2$ & $13$ & $\exists x_0(a)$ \\
       $5x^4 + 6ax^2 +a^2, a \not \equiv y^2$ & $5$& ---\\
       $6x^5 \pm 2$ & $11$& ---\\
       $6x^6 \pm 4$ & $11$& ---\\
       $6x^5 \pm 3a^2x^2+2ax\pm 5, a = y^2$ & $11$& $\exists x_0(a)$ \\
       $6x^5 \pm 12a^2x^2 + 2ax \pm 4, \ a \not \equiv y^2$ & $11$&  $\exists x_0(a)$ \\
    \end{tabular}
    \caption{Existence of roots for the derivatives of Dickson's permutation polynomials}
    \label{tab:derivates}
\end{table}
This already completes the proof in all cases but $p = 5m \pm 2$ by Theorem~\ref{thm:disc:permutation} and Proposition~\ref{prop:criterion_permutation}. So let us assume that $p = 5m \pm 2$. Then $p^3 \equiv 3 \mod 5$ if $p \equiv 2 \mod 5$ and $p^2 \equiv 2 \mod 5$ if $p \equiv 3 \mod 5$. Since the polynomials in the table are not only permutation polynomials for $p = 5m \pm 2$ but also for $p^n = 5m \pm 2$ for some $n \in \mathbb{N}$, see again \cite{Dic96}, we conclude that $f(n) = x^5 + ax^3 + 5^{-1}a^2x, a \not \equiv 0$ is a permutation polynomial for any power (by Hensel's lemma). 
\end{proof}
\section{Poissonian pair correlations} \label{sec:PPC}
In \cite{Wei23c}, the concept of (weak) Poissonian pair correlations was transferred from sequences in the real unit interval $[0,1]$ to sequences in the p-adic integers $\mathbb{Z}_p$. Although it was shown therein that a uniformly distributed random sequence in $\mathbb{Z}_p$ generically has Poissonian pair correlations, no \emph{explicit} example was given. Therefore, the author of this paper started research on the present topic by trying to find such examples of sequences $(x_n) \subset \mathbb{Z}_p$ generated by polynomials with Poissonian pair correlations. Let us first recall the definition.
\begin{definition} \label{def:PPC} For $p$ a prime number, $s \in \mathbb{R}^+_0, 0 < \alpha \leq 1$ and $N \in \mathbb{N}$ define
$$F_{N,\alpha,p}(s) := \frac{1}{N^2} \frac{1}{\mu\left( D_p(0,s/N^\alpha)\right)} \# \left\{ 1 \leq i \neq j \leq N \, : \, \left|x_i-x_j\right|_p \leq \frac{s}{N^\alpha} \right\}.$$
We say that $(x_n) \subset \mathbb{Z}_p$ has weak Poissonian pair correlations for $0 < \alpha \leq 1$ if 
$$\lim_{N \to \infty} F_{N,\alpha,p}(s) = 1$$
for all $s \in \mathbb{R}^+_0$. For $\alpha = 1$, we simply say that $(x_n)$ has Poissonian pair correlations.
\end{definition}
However, it can be easily inferred from the proof of Theorem~\ref{thm:disc:permutation} that the author's original aim cannot be achieved. In order to better see this connection, we moved the proof of Theorem~\ref{thm:disc:permutation} to the current section on Poissonian pair correlations. A first and basic observation which will be used is the fact $f(n+N) \equiv f(n) \mod N$ for all $n, N \in \mathbb{N}$. A closer investigation leads of this property leads to the following proof.
\begin{proof}[Proof of Theorem~\ref{thm:disc:permutation}]
It follows automatically from Hensel's lemma, that a permutation $\mod p^2$ is also a permutation $\mod p^k$ for all $k>2$, so it suffices to show uniform distribution $\mod p$ and $\mod p^2$. Assume that $f$ is a permutation polynomial. Then every disk $D(z,p^{-k})$ contains $\lfloor Np^{-k} \rfloor$ or $\lfloor Np^{-k} \rfloor + 1$ elements. This implies that $D_N(x_n) = \mathcal{O}\left( \frac{1}{N} \right)$. If $f$ is not a permutation polynomial, then there exists a $x_0 \in \left\{ 0, 1, \ldots p^2-1 \right\}$ with $f(n) \not \equiv x_0 \mod p^2$ for all $n \in \mathbb{N}$. Therefore $D(z,p^{-3}) = \emptyset$ and $D_N(x_n) \geq p^{-3}$ for all $N \in \mathbb{N}.$
\end{proof}
\begin{remark} \label{rem:uniform} In fact, the proof of Theorem~\ref{thm:disc:permutation} shows for any polynomial $f$ that the sequence $(x_n) = (f(n))$ either satisfies $D_N(x_n) = \mathcal{O}\left(\frac{1}{N} \right)$ or $D_N(x_0)$ does not converge to $0$. 
\end{remark}
It is now immediate to derive Theorem~\ref{thm:poissonian}.
\begin{proof}[Proof of Theorem~\ref{thm:poissonian}] We may assume that $f$ is a permutation polynomial $\mod p^k$ for all $k \in \mathbb{N}$. However, for $N \leq p^{k}$ the difference $f(n)-f(m)$ cannot be divisible by $p^k$ because then $f(n)-f(m)$ would be in the same residue class $\mod p^k$. Hence $|f(n)-f(m)|_p > p^{-k}$ and $|f(n)-f(m)|_p \leq \frac{s}{N}$ cannot hold for any $1 \leq n \neq m \leq N = p^k$ and $s < 1$. Therefore $(x_n)$ cannot have p-adic Poissonian pair correlations.
\end{proof}
Hence, the question how to find \emph{explicit} sequences possessing Poissonian pair correlation remains open for future research.
\section{An application to real discrepancy theory} \label{sec:real}
Surely, real discrepancy theory has aroused more interest in the literature than its p-adic counterpart. Therefore, we shortly discuss at the end of this paper how our results can be transferred to the real setting. As a gain from this we will obtain (new) real low-discrepancy sequences. Let the $p$-adic expansion of $z \in \mathbb{Z}_p$ be given as 
$$z = \sum_{i=0}^\infty a_i p^i$$
for some coefficients $0 \leq a_i < p$. Then define the function $\varphi_p : \mathbb{Z}_p \to [0,1)$ by
$$\varphi_p(z) = \sum_{i=0}^\infty \left\{ a_i p^{-i-1} \right\},$$
where $\left\{ \cdot \right\}$ denotes the fractional part of a number in $\mathbb{R}$. Furthermore, define for a real sequence $(x_n) \subset [0,1)$ and $N \in \mathbb{N}$ its discrepancy by
$$D_N(x_n) = \sup_{[a,b) \subset [0,1)} \left| \frac{\#\left( [a,b) \cap \{ x_1,\ldots,x_N \}\right)}{N} - (b-a) \right|.$$
Meijer proved in \cite{Mei68} the following theorem.
\begin{theorem} \label{thm:meijer} Let $(x_n) \subset \mathbb{Z}_p$ be an arbitrary sequence and denote its p-adic discrepancy for $N \in \mathbb{N}$ by $\delta_N$. Let $d_N$ be the discrepancy of the corresponding (real) sequence $\varphi_p(x_n) \subset [0,1)$. Then it holds that
$$\delta_N < d_N < \delta_N \left( 2 + \frac{2(p-1)}{\log p} \log \left( \delta_N^{-1} \right) \right).$$    
\end{theorem}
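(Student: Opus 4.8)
To prove Theorem~\ref{thm:meijer}, the plan is to exploit the fact that the Monna-type map $\varphi_p$ sets up a measure-preserving correspondence between $p$-adic balls and the \emph{elementary} real intervals $[m/p^k,(m+1)/p^k)$. The first thing I would record is that $\varphi_p^{-1}\bigl([m/p^k,(m+1)/p^k)\bigr)$ is exactly the ball $D_p(z,p^{-k})$ of those $x$ sharing the first $k$ digits $a_0,\dots,a_{k-1}$ with $m=\sum_{i=0}^{k-1}a_i p^{k-1-i}$: freezing these digits fixes the first $k$ base-$p$ digits of $\varphi_p(x)$ and lets the tail $\sum_{i\ge k}a_i p^{-i-1}$ range over all of $[0,p^{-k})$. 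Consequently, for every such elementary interval $I$ one has $\#\{n\le N:\varphi_p(x_n)\in I\}=\#\bigl(D_p(z,p^{-k})\cap\{x_1,\dots,x_N\}\bigr)$ and $|I|=\mu(D_p(z,p^{-k}))=p^{-k}$, so the local discrepancy of $\varphi_p(x_n)$ on $I$ coincides with that of $(x_n)$ on the corresponding ball.

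The lower bound $\delta_N<d_N$ is then essentially immediate: the elementary intervals form a subfamily of the intervals $[a,b)$ entering the real discrepancy, and on this subfamily the two local discrepancies agree, whence $\delta_N\le d_N$; the strict inequality reflects that the real supremum ranges over the strictly larger family of all subintervals of $[0,1)$.

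The real work is the upper bound, which I expect to be the main obstacle. The idea is to approximate an arbitrary $[a,b)$ by elementary intervals. For a truncation level $K$ to be fixed later, I would take the base-$p$ expansion of $b$ and write $[0,\tilde b)$ with $\tilde b=\lfloor b\,p^{K}\rfloor/p^{K}$ as a disjoint union of elementary intervals, using at each level $k=1,\dots,K$ at most $p-1$ intervals of length $p^{-k}$ (the standard greedy digit decomposition). Summing the local bound $\le\delta_N$ over these at most $(p-1)K$ intervals controls the count on $[0,\tilde b)$ up to $(p-1)K\,\delta_N$, while the leftover $[\tilde b,b)$ lies inside a single elementary interval of length $p^{-K}$ and hence contributes only $\mathcal{O}(p^{-K}+\delta_N)$ to both the count error and the measure error. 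Writing $[a,b)=[0,b)\setminus[0,a)$ and repeating for $a$ yields $d_N\le 2(p-1)K\,\delta_N+\mathcal{O}(p^{-K})+\mathcal{O}(\delta_N)$.

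Finally I would optimize in $K$: choosing $K\approx\log(\delta_N^{-1})/\log p$ balances the growing term $(p-1)K\,\delta_N$ against the shrinking $p^{-K}\approx\delta_N$. This substitution turns $2(p-1)K\,\delta_N$ into $\frac{2(p-1)}{\log p}\,\delta_N\log(\delta_N^{-1})$ and absorbs the remaining terms into an additive $\mathcal{O}(\delta_N)$, reproducing the leading behaviour of the stated bound $d_N<\delta_N\bigl(2+\tfrac{2(p-1)}{\log p}\log(\delta_N^{-1})\bigr)$. The delicate points are the exact count of how many elementary intervals appear per level and the sharp handling of the two boundary pieces, since these determine the precise constant $2$ in front and the $\mathcal{O}(\delta_N)$ term; the optimization itself is routine once the decomposition is set up.
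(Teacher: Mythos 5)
First, a point of reference: the paper does not prove Theorem~\ref{thm:meijer} at all --- it is quoted from Meijer's paper \cite{Mei68} as a known result --- so there is no proof in the paper to compare your argument against, and your proposal has to stand on its own as a proof of the precise double inequality. In outline you follow what is surely the natural route (and, in spirit, Meijer's): match $p$-adic balls with elementary intervals, decompose $[0,b)$ digit by digit, and optimize the truncation level $K$. But there are concrete gaps. The first is your opening claim of \emph{exact} equality of counts, $\#\{n\le N:\varphi_p(x_n)\in I\}=\#\bigl(D_p(z,p^{-k})\cap\{x_1,\dots,x_N\}\bigr)$: this is false in general. The point of $D_p(z,p^{-k})$ whose digits from position $k$ onward are all $p-1$ is sent by $\varphi_p$ to the right endpoint $(m+1)/p^k$, i.e.\ \emph{outside} $I$, while the analogous point of the adjacent ball lands exactly on the left endpoint of $I$, i.e.\ \emph{inside} $I$. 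A sequence may hit such points, even with multiplicity up to $N\delta_N$, so naively each elementary interval only satisfies a local bound of roughly $2\delta_N$ rather than $\delta_N$ --- which would double your leading constant. This is repairable (in a union of consecutive elementary intervals the boundary points telescope, so the whole decomposition loses only one such term), but it must be addressed; as written the cornerstone identity of the proof is incorrect.

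Second, your lower bound only yields $\delta_N\le d_N$: the observation that the real supremum runs over a strictly larger family of sets does not make the supremum strictly larger, so the strict inequality $\delta_N<d_N$ is asserted, not proven; one needs a genuine perturbation argument (shrink or enlarge a near-extremal elementary interval past the finitely many points $\varphi_p(x_n)$ so as to strictly increase the local deviation), and that is exactly where the endpoint issue above resurfaces. Third, and most substantively, your accounting does not reach the stated constant. Your own estimates give, for each of $[0,a)$ and $[0,b)$, a cost of $(p-1)K\delta_N$ for the decomposed part plus $p^{-K}+\mathcal{O}(\delta_N)$ for the leftover, hence $d_N\le\delta_N\bigl(C+2(p-1)K\bigr)$ with some $C>2$; moreover $K$ must be an \emph{integer}, and rounding $\log(\delta_N^{-1})/\log p$ up to $\lceil\log(\delta_N^{-1})/\log p\rceil$ already costs an extra additive $2(p-1)\delta_N$ that the theorem's ``$2$'' cannot absorb. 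So what you actually prove is $d_N=\mathcal{O}\bigl(\delta_N\log(\delta_N^{-1})\bigr)$ with the correct leading coefficient $\tfrac{2(p-1)}{\log p}$, which is the qualitatively relevant statement, but not the sharp inequality claimed in the theorem; the ``delicate points'' you defer are precisely the content that separates your sketch from Meijer's result.
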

The only application of Theorem~\ref{thm:meijer} given in \cite{Mei68} was the sequence $(x_n) = (ax_n+b)$. More generally, it holds for any sequence $(x_n) = f(n) \subset \mathbb{Z}_p$ generated by an appropriate permutation polynomial $f$ that $\delta_N^{-1} \leq \frac{c}{N}$ for some $c \in \mathbb{R}$. Thus it follows from Theorem~\ref{thm:meijer} that $\varphi_p(x_N)$ has discrepancy of order $\mathcal{O}\left( \frac{\log(N)}{N} \right)$ and is therefore a real low-discrepancy sequence by definition, see \cite{KN74}. Since explicit examples of real low-discrepancy sequences are comparably hard to find this observation presents yet another motivation for the investigations conducted in the present paper.

\bibliographystyle{alpha}
\bibdata{literatur}
\bibliography{literatur}

\textsc{Ruhr West University of Applied Sciences, Duisburger Str. 100, D-45479 M\"ulheim an der Ruhr,} \texttt{christian.weiss@hs-ruhrwest.de}
\end{document}